\theoremstyle{plain}
\newtheorem{thm}{Theorem}
\newtheorem{lem}{Lemma}
\title[Integers as combinations of powers]
{Representing integers as linear combinations of power products}
\author{Lajos Hajdu}
\address{Institute of Mathematics\\
University of Debrecen\\
H-4010 Debrecen, P.O. Box 12\\
Hungary}
\email{hajdul@math.unideb.hu}
\author{Rob Tijdeman}
\address{Mathematical Institute\\
Leiden University\\
2300 RA Leiden, P.O. Box 9512\\
The Netherlands}
\email{tijdeman@math.leidenuniv.nl}
\subjclass[2010]{11D85}
\keywords{representation of integers, linear combinations, $S$-integers, power products}
\thanks{Research supported in part by the OTKA grant K75566, and by the T\'AMOP 4.2.1./B-09/1/KONV-2010-0007 project. The project is implemented through the New Hungary Development Plan, cofinanced by the European Social Fund and the
European Regional Development Fund.}
\begin{document}

\begin{abstract}
Let $P$ be a finite set of at least two prime numbers, and $A$ the set of positive integers that are products of powers of primes from $P$. Let $F(k)$ denote the smallest positive integer which cannot be presented as sum of less than $k$ terms of $A$. In a recent paper Nathanson asked to determine the properties of the function $F(k)$, in particular to estimate its growth rate. In this paper we derive several results on $F(k)$ and on the related function $F_{\pm}(k)$ which denotes the smallest positive integer which cannot be presented as sum of less than $k$ terms of $A \cup (-A)$.
\end{abstract}

\maketitle

\section{Introduction}

Let $P$ be a nonempty finite set of at least two prime numbers, and $A$ the set of positive integers that are products of powers of primes from $P$. Put $A_{\pm} =A \cup (-A)$. Then there does not exist an integer $k$ such that every positive integer can be represented as a sum of at most $k$ elements of $A_{\pm}$. This follows e.g. from Theorem 1 of Jarden and Narkiewicz \cite{jn}, cf. \cite{h, ahl}. At a conference in Debrecen in 2010 Nathanson announced the following stronger result (see also \cite{n}):
\\
\\
{\it For every positive integer $k$ there exist infinitely many integers $n$ such that $k$ is the smallest value of $l$ for which $n$ can be written as
$$
n= a_1 + a_2 + \cdots + a_l ~~(a_1, a_2, \dots, a_l \in A_{\pm}).
$$
}
\\
Let $F(k)$ be the smallest positive integer which cannot be
presented as a sum of less than $k$ terms of $A$ and
 $F_{\pm}(k)$ the smallest positive integer which cannot be
presented as a sum of less than $k$ terms of $A_{\pm}$. Problem 2 of \cite{n} Nathanson is to give estimates for $F(k)$.
(The notation in \cite{n} is different from ours.)
Problem 1 is the corresponding question for $F_{\pm}(k)$ in case $A$ consists of the pure powers of 2 and of 3.

In \cite{ht} two of the authors considered Problem 1 in the more general setting of powers of any finite set of positive integers. They gave lower and upper bounds for $F(k)$ and $F_{\pm}(k)$. In the present paper we consider Problem 2. We give lower and upper bounds for $F(k)$ and $F_{\pm}(k)$ for $A$ as defined above.

We show that there exists an effectively computable number $c$ depending only on $P$, an effectively computable number $C$ depending only on $\varepsilon$ and an effectively computable constant $C_{\pm}$ such that
$k^{ck}<F(k) < C(kt)^{(1 + \varepsilon)kt}$ and $k^{ck}<F_{\pm}(k) < \exp((kt)^{C_{\pm}})$.
The method of proof is an adaptation of that in \cite{ht}, but in the case of the lower bound an additional argument is needed. For the upper bound we need an extended version of a theorem of \'Ad\'am, Hajdu and Luca \cite{ahl} in which a result of Erd\H{o}s, Pomerance and Schmutz \cite{eps} plays an important part. We state the  result of Erd\H{o}s, Pomerance and Schmutz and its refinement in Section \ref{sec2} and our generalization of the result of \'Ad\'am, Hajdu and Luca in Section \ref{sec3}. In Section \ref{sec4} we derive the lower and upper bounds for $F(k)$ and $F_{\pm}(k)$. In Section 5 we apply the Qualitative Subspace Theorem to prove that for some number $c^*$ depending only on $P, k$ and $\varepsilon$ the inequality $F_{\pm} (k) \leq (kt)^{(1+ \varepsilon)kt}$ holds for $k > c^*$.

\vskip1cm

\section{An extension of a theorem of Erd\H{o}s, Pomerance and
Schmutz}
\label{sec2}

Let $\lambda(m)$ be the Carmichael function of the positive
integer $m$, that is the least positive integer for which
$$
b^{\lambda(m)} \equiv 1 ~~({\rm mod}~m)
$$
for all $b \in \mathbb{Z}$ with gcd$(b,m)=1$. Theorem 1 of
\cite{eps} gives the following information on small values of the Carmichael function.

\vskip.1cm

\noindent {\it For any increasing sequence $(n_i)_{i=1}^{\infty}$ of positive integers, and any positive constant $C_1 < 1/ \log 2$, one has
$$
\lambda (n_i) > ( \log n_i)^{C_1 \log \log \log n_i}
$$
for $i$ sufficiently large. On the other hand, there exist a
strictly increasing sequence $(n_i)_{i=1}^{\infty}$ of positive integers and a positive constant $C_2$, such that, for every $i$,
$$
\lambda (n_i) < ( \log n_i)^{C_2 \log \log \log n_i}.
$$}

This nice result does not give any information on the size of
$n_i$. For our purposes the following quantitative version will be needed.

\vskip.1cm

\begin{lem} [\cite{ht}, Theorem 1]
\label{thm1} There exist positive constants $C_3, C_4$ such that
for every large integer $i$ there is an integer $m$ with
$$ \log m
\in [\log i, (\log i)^{C_3}] {\rm ~ and ~}
\lambda(m) < ( \log m)^{C_4 \log \log \log m}.
$$
\end{lem}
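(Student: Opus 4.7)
The plan is to carry out the Erd\H os--Pomerance--Schmutz construction of integers with small Carmichael function, but with all parameters expressed explicitly in terms of $i$, so that the resulting $m$ is automatically controlled by $\log i$ from both sides. Introduce parameters $x = x(i)$ and $y = y(x)$ to be chosen, and set
\[
\mathcal{P}(x,y) = \{p \le x : p \text{ prime},\ p-1 \text{ is } y\text{-smooth}\}, \qquad m = \prod_{p \in \mathcal{P}(x,y)} p.
\]
Since $m$ is squarefree, $\lambda(m) = \operatorname{lcm}\{p-1 : p \in \mathcal{P}(x,y)\}$, and this lcm divides $\prod_{q \le y} q^{\lfloor \log x/\log q\rfloor}$, giving the universal upper bound $\log \lambda(m) \le \pi(y)\log x$, while the Chebyshev inequality gives $\log m \le 2x$.

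The quantitative input I would invoke is a uniform lower bound for $|\mathcal{P}(x,y)|$, of the form $|\mathcal{P}(x,y)| \ge x^{1-\delta(x,y)}$ with $\delta \to 0$ provided $y$ grows at least like a small positive power of $\log x$. Such a bound is implicit in Erd\H os--Pomerance--Schmutz, building on Friedlander-type estimates for primes with smooth shifts. From it one obtains $\log m = \sum_{p \in \mathcal{P}(x,y)} \log p \ge x^{1-\delta}$.

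Next I would balance parameters. Set $x := \lceil (\log i)^{C_3'} \rceil$ with $C_3'$ slightly larger than $1/(1-\delta)$; then $\log m \ge x^{1-\delta} \ge \log i$, while $\log m \le 2x \le (\log i)^{C_3}$ for some $C_3 > C_3'$, placing $\log m$ in the required interval. Choosing $y$ as a suitable slow-growing function of $\log x$, for instance $y = (\log x)^{A}$ with $A < 1$, makes $\pi(y)\log x$ of order $(\log x)^{A+1}/\log\log x$. Using $\log\log m \asymp \log\log x$ and $\log\log\log m \asymp \log\log\log x$, a direct comparison shows this is dominated by $C_4\log\log m\cdot\log\log\log m$ for any sufficiently large absolute $C_4$, which yields $\lambda(m) < (\log m)^{C_4 \log\log\log m}$ as required.

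The main obstacle is securing the uniform lower bound on $|\mathcal{P}(x,y)|$ when $y$ is as small as a power of $\log x$: this is the analytic heart of the Erd\H os--Pomerance--Schmutz argument, and one must verify that the constants surviving through the Friedlander-type input are absolute, not dependent on $i$. Once that is in place, the remaining work is a mechanical optimisation of $C_3, C_3', A, C_4$, and the conclusion $m \ge i$ is an immediate consequence of $\log m \ge \log i$.
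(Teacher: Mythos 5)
The paper itself does not prove this lemma; it is quoted as Theorem 1 of \cite{ht}, which is a quantitative reworking of the Erd\H{o}s--Pomerance--Schmutz construction. Measured against that construction, your proposal has two concrete errors. First, the analytic input you invoke is false in the range where you use it: if $y=(\log x)^{A}$ with $A<1$, then every $y$-smooth integer up to $x$ is of the form $\prod_{q\le y}q^{e_q}$ with $e_q\le \log x/\log 2$, so
$$
\sharp\mathcal{P}(x,y)\le (1+\log x/\log 2)^{\pi(y)}=\exp\bigl(O((\log x)^{A}\log\log x)\bigr)=x^{o(1)},
$$
which flatly contradicts the claimed lower bound $\sharp\mathcal{P}(x,y)\ge x^{1-\delta}$; no Friedlander-type theorem produces $x^{1-o(1)}$ primes $p\le x$ with $p-1$ being $y$-smooth for $y$ polylogarithmic in $x$ (this is far beyond what is known even for $y=x^{o(1)}$). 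Second, even granting $\log m\asymp x^{1-\delta}$, your final comparison goes the wrong way: $\pi(y)\log x\asymp(\log x)^{1+A}/\log\log x$, while $C_4(\log\log m)(\log\log\log m)\asymp(\log x)(\log\log x)$, and the former \emph{dominates} the latter for every fixed $A>0$. There is no repair within your framework: to force $\pi(y)\log x\le C_4(\log\log m)(\log\log\log m)$ you would need $\pi(y)\lesssim\log\log x$, i.e.\ $y$ of doubly logarithmic size, and for such $y$ the set $\mathcal{P}(x,y)$ is so sparse that $\log m$ cannot be anywhere near $x^{1-\delta}$; the scheme is internally inconsistent.

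The construction that actually works is structurally different. One fixes $M=\prod_{q\le y}q^{a_q}$ with $q^{a_q}\le y$ and takes $m$ to be the product of the primes $p$ with $p-1\mid M$ --- not merely $p-1$ being $y$-smooth: the exponent control is what removes the parasitic factor $\log x$ from the lcm and gives $\lambda(m)\mid M$, hence $\log\lambda(m)=O(y)$. A Prachar-type averaging argument then shows that some admissible $M$ has at least $\exp(cy/\log y)$ divisors $d$ with $d+1$ prime, so $\log\log m\gg y/\log y$ and therefore $\log\lambda(m)=O(y)=O(\log\log m\cdot\log\log\log m)$, which is the shape of bound required. The additional content of the quantitative version in \cite{ht} is to localize this averaging so that the resulting modulus can be placed with $\log m$ in the prescribed window $[\log i,(\log i)^{C_3}]$; that localization step is entirely absent from your proposal.
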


\section{An extension of a theorem of \'Ad\'am, Hajdu and Luca}

\label{sec3}

Let $k$ be a positive integer. Put
$$
H_{P,k} = \{n \in \mathbb{Z}: n= \sum_{i=1}^l a_i ~~{\rm with}~~ l \leq k\}
$$
where $a_i \in A ~~  (i=1, 2, \dots,k)$. For $H
\subseteq \mathbb{Z}$ and $m \in \mathbb{Z}, m \geq 2$, we write
$\sharp H$ for the cardinality of the set $H$ and
$$
H ({\rm mod}~m) = \{i: 0 \leq i < m, h \equiv i ~( {\rm mod}~ m)~
{\rm for ~ some~} h \in H \}.
$$

The next theorem is a generalization of a result from \cite{ahl}.

\vskip .3cm

\begin{thm}
\label{thm1} Let $C_3, P$ and $k$ be given as above. There is a constant $C_5$ such that for every sufficiently large integer $i$ there exists an integer $m$ with $\log m \in [\log i, (\log i)^{C_3}]$ and
$$
\sharp H_{P, k} ~({\rm mod}~m) < (\log m)^{C_5kt \log\log \log m}.
$$
\end{thm}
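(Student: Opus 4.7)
The plan is to apply Lemma~\ref{thm1} to choose $m$ with $\lambda(m)$ unusually small, and then bound $\sharp H_{P,k}\,({\rm mod}~m)$ by counting, prime by prime, the residues attainable by a power product $p_1^{e_1}\cdots p_t^{e_t}$ modulo $m$. First I would invoke Lemma~\ref{thm1} to obtain $m$ satisfying $\log m \in [\log i, (\log i)^{C_3}]$ and $\lambda(m) < (\log m)^{C_4 \log\log\log m}$.

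Next, for each $p \in P$ write $m = p^{\alpha}m'$ with $\gcd(m',p)=1$ and $\alpha=v_p(m)$. The residues $p^e$ modulo $p^\alpha$ for $0 \leq e < \alpha$ are all distinct, while for $e \geq \alpha$ one has $p^e \equiv 0~({\rm mod}~p^\alpha)$ and $p^e$ modulo $m'$ is periodic of period $\mathrm{ord}_{m'}(p) \leq \lambda(m') \leq \lambda(m)$. By the Chinese remainder theorem,
$$
\sharp \{p^e~({\rm mod}~m) : e \geq 0 \} \leq \alpha + \lambda(m) \leq \log m + \lambda(m).
$$
Since each $a \in A$ factors as $p_1^{e_1}\cdots p_t^{e_t}$, its residue modulo $m$ is determined by those of its prime-power factors, so $\sharp (A~({\rm mod}~m)) \leq (\log m + \lambda(m))^t$. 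Summing over the $l \leq k$ possible numbers of summands in $H_{P,k}$ yields
$$
\sharp H_{P,k}~({\rm mod}~m) \leq (k+1)(\log m + \lambda(m))^{kt}.
$$

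Finally, I would substitute the Carmichael bound. For $m$ large the Erd\H{o}s--Pomerance--Schmutz lower bound forces $\lambda(m) > \log m$, so $\log m + \lambda(m) \leq 2\lambda(m) \leq 2(\log m)^{C_4 \log\log\log m}$, and the previous estimate becomes
$$
\sharp H_{P,k}~({\rm mod}~m) \leq (k+1)\,2^{kt}\,(\log m)^{C_4 kt \log\log\log m},
$$
which for any $C_5 > C_4$ is bounded by $(\log m)^{C_5 kt \log\log\log m}$ once $i$ (and hence $m$) is taken sufficiently large in terms of $k$, $t$ and $P$. The one mild subtlety is that Lemma~\ref{thm1} gives no coprimality of $m$ with the primes of $P$; however, the resulting loss of a factor $\alpha \leq \log m$ per such prime is absorbed into the $\lambda(m)$ term. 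The real content of the theorem is supplied by Lemma~\ref{thm1}; the rest is routine counting.
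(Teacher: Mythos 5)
Your proposal is correct and follows essentially the same route as the paper: choose $m$ via Lemma~1 so that $\lambda(m)$ is small, bound the residues of each $p^e$ modulo $m$ by $v_p(m)+\lambda(m)$, and raise to the power $kt$; the only difference is that you reprove the counting lemma of \'Ad\'am--Hajdu--Luca (Lemma~2) inline rather than citing it, and you invoke the Erd\H{o}s--Pomerance--Schmutz lower bound where the trivial comparison $\log m \le (\log m)^{C_4\log\log\log m}$ already suffices. (A minor slip: for $p=2$ one only has $\alpha \le \log m/\log 2$, not $\alpha\le\log m$, but this changes nothing.)
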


\vskip.2cm

In the proof of Theorem \ref{thm1} the following lemma is used.

\vskip.1cm

\begin{lem}
{\rm (\cite{ahl}, Lemma 1)}.  \label{lem1}
\noindent Let $m=q_1^{\alpha_1} \cdots q_z^{\alpha_z}$ where
$q_1, \dots, q_z$ are distinct primes and $\alpha_1, \dots,
\alpha_z$ positive integers, and let $b \in \mathbb{Z}$. Then
$$
\sharp \{b^u~({\rm mod}~m) : u \geq 0 \} \leq \lambda (m) +
\max_{1 \leq j \leq z} \alpha_j.
$$
\end{lem}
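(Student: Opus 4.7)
The plan is to reduce to the coprime case via a CRT-style splitting of the modulus according to which prime divisors of $m$ also divide $b$. Write $m = m_1 m_2$ where
$$m_1 = \prod_{q_j \mid b} q_j^{\alpha_j}, \qquad m_2 = \prod_{q_j \nmid b} q_j^{\alpha_j},$$
so that $\gcd(b, m_2) = 1$ and every prime divisor of $m_1$ divides $b$. Set $M = \max_{1 \le j \le z} \alpha_j$. By the Chinese Remainder Theorem, a residue $b^u \pmod{m}$ is determined by the pair $(b^u \bmod m_1,\, b^u \bmod m_2)$, and the cardinality of the set of such pairs equals $\sharp\{b^u \pmod m : u \ge 0\}$.

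Next I would handle each factor separately. Modulo $m_2$, since $\gcd(b, m_2) = 1$, the definition of the Carmichael function gives $b^{\lambda(m_2)} \equiv 1 \pmod{m_2}$, so the sequence $(b^u \bmod m_2)_{u \ge 0}$ is purely periodic with period dividing $\lambda(m_2)$, and in particular takes at most $\lambda(m_2) \le \lambda(m)$ distinct values. Modulo $m_1$, for each prime power $q_j^{\alpha_j}$ with $q_j \mid b$ we have $q_j^{\alpha_j} \mid b^{\alpha_j} \mid b^M$, so $b^u \equiv 0 \pmod{m_1}$ for every $u \ge M$.

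Combining the two observations, for $u \ge M$ the residue $b^u \pmod{m}$ is completely determined by $b^u \pmod{m_2}$, which contributes at most $\lambda(m)$ distinct residues. The remaining indices $u = 0, 1, \dots, M-1$ contribute at most $M$ further residues. Therefore
$$\sharp\{b^u \pmod m : u \ge 0\} \le \lambda(m) + M = \lambda(m) + \max_{1 \le j \le z} \alpha_j,$$
which is the desired inequality.

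The only subtlety to watch for is the degenerate cases $m_1 = 1$ (when $\gcd(b, m) = 1$, the bound collapses to $\lambda(m)$, which is trivially $\le \lambda(m) + M$) and $m_2 = 1$ (when every prime divisor of $m$ divides $b$, so $b^u \equiv 0 \pmod m$ for $u \ge M$, giving at most $M + 1 \le \lambda(m) + M$ residues since $\lambda(m) \ge 1$). These boundary cases are handled by the same argument without modification, so I do not foresee a real obstacle; the core of the proof is simply the observation that the "nilpotent" part of the orbit stabilises after $M$ steps while the "unit" part is controlled by $\lambda$.
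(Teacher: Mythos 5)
Your proof is correct. Note that the paper itself gives no proof of this lemma --- it is quoted verbatim from Lemma~1 of the cited work of \'Ad\'am, Hajdu and Luca --- and your argument (splitting $m$ into the part supported on primes dividing $b$, where $b^u$ vanishes for $u \ge \max_j \alpha_j$, and the coprime part, where the orbit is purely periodic with period dividing $\lambda(m)$) is precisely the standard argument behind that reference: the sequence $b^u \pmod m$ is eventually periodic with preperiod at most $\max_j \alpha_j$ and period dividing $\lambda(m)$, so nothing further is needed.
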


\vskip.1cm

\begin{proof}[Proof of Theorem $1$.] Let $i$ be a large
integer. Choose $m$  according to Lemma \ref{thm1}. Write $m$ as a product of powers of distinct primes as in Lemma \ref{lem1}. Lemma \ref{lem1} implies that
$$
\sharp \{h~({\rm mod}~m) : h \in H_{P,k} \}
\leq  \left(\lambda (m) + \max_{1 \leq j \leq z} \alpha_j + 1\right)^{kt}.
$$
On the other hand, with the constant $C_4$ from Lemma \ref{thm1},
$$
\lambda (m) + \max_{1 \leq j \leq z} \alpha_j < (\log m)^{C_4 \log \log \log m} + \frac{ \log m}{ \log 2} .
$$
The combination of both inequalities yields the theorem.
\end{proof}

\section{Effective results on combinations of power products}

\label{sec4}

Suppose we want to express the positive integer $n$ as a finite sum of elements of $A$. For this we  apply the greedy algorithm. If we subtract the largest element of $A$ not exceeding $n$ from $n$, we are left with a rest which is less than $n/(\log n)^{c_1}$ for some number $c_1>0$ depending only on the two smallest elements of $P$ according to \cite{t}.
We can iterate subtracting the largest element of $A$ not exceeding the rest from the rest and as long as the rest exceeds $\exp (\sqrt { \log n})$ reduce the rest each time by a factor at least $(\log n)^{c_1/2}$. If the rest is smaller than  $\exp (\sqrt { \log n})$ we can reduce the rest each step by a factor larger than some constant $c_2>1$, with $c_2$ depends only on the smallest prime from $P$. Thus we find that the sum of $$k \leq  \frac{2\log n} { c_1  \log \log n} + \frac {\sqrt{ \log n}} {\log c_2} $$ elements of $A$ suffices to represent $n$. This implies the lower bound $k^{ck}$ for $F(k)$ in Theorem 2(i) below. Of course, $F(k) \leq F_{\pm}(k)$ for all $k$.

For an upper bound for $F(k)$ we study the number of representations of positive integers up to $n$ as $\sum_{j=1}^{l} a_j$ with $a_j \in A, l \leq k$.
Since the number of elements of $A \cup \{0\}$ not exceeding $n$ is at most $(C_6 \log n)^t$, the number of represented integers is at most $(C_6 \log n )^{kt}$. If this number is less than $n$, then we are sure that some positive integer $\leq n$ is not represented. This is the case if
$$
kt < \frac{ \log n }{  \log \log n + \log C_6}.
$$
Suppose $n> (kt)^{(1 + \varepsilon)kt}$.
Then it follows from the monotonicity of the function $\log x / (\log \log x + C_6)$ for large $x$ that
$$
\frac { \log n} {\log \log n + C_6} > \frac {(1 + \varepsilon) kt \log kt} { \log (kt) + \log((1+ \varepsilon) \log (kt)) + C_6} > kt
$$
for $kt$ sufficiently large. By choosing $C_7$ suitably for the smaller values of $kt$, it suffices for all values of $kt$ that $n \geq C_7(kt)^{ (1 + \varepsilon)kt} $. Thus
$$
F(k) \leq C_7(kt)^{(1 + \varepsilon)kt}.
$$

Next we consider representations by sums of elements from $A_{\pm}$. We write $H_{P,k}^* = \{ n \in \mathbb{Z} : n = \sum_{j=1}^l a_j ~ {\rm  with} ~  a_j \in A_{\pm}, l \leq k \}.$ Choose the smallest positive integer $i>10$ such that $j > (\log j)^{C_5kt \log \log \log j}$ for $j\geq i$. Then $i < 2(\log i)^{C_5kt \log \log \log i}$. It follows that
$$
\log i  < C_8 kt (\log \log i)( \log \log \log i)
$$
for some constant $C_8$.
Hence $\log i < C_9kt ( \log (kt))( \log \log (kt))$ for some
constant $C_9$. According to Theorem \ref{thm2} there exists an integer $m$ with $ \log i \leq \log m \leq (\log i )^{C_3}$ such that all representations in $H_{P,k}^*$  are covered by at most  $ (\log m)^{C_5kt \log \log \log m}$ residue classes modulo $m$. By the definition of $i$ and the inequality $ i \leq m$, we see that this number of residue classes is less than $m$, therefore at least one positive integer $n \leq m$ has no representation of the form $\sum_{j=1}^{k} a_j$ with $a_j\in A\cup \{0\}$ for $j=1,\dots,k$. Hence
$$
\log n \leq \log m \leq ( \log i )^{C_3} < \left( C_9 kt ( \log kt)( \log \log kt)\right)^{C_3}  < (kt)^{C_{10}}
$$
for some constant $C_{10}$. Thus $F_{\pm} (k) < \exp((kt)^{C_{10}}).$

So we have proved the following result.

\vskip.3cm

\begin{thm}
\label{thm2} Let $P=\{p_1, \dots, p_t\}$ be a finite set of primes with $t \geq 2$. Let $A$ be the set of integers composed of numbers from $P$. Let $k$ be a positive integer. Denote by $F(k)$ the smallest positive integer which cannot be represented in the form $ \sum_{i=1}^{k} a_i$ with $a_i \in A\cup \{0\}$ for all $i$ and by $F_{\pm}(k)$ the smallest positive integer which cannot be represented in the form $ \sum_{i=1}^{k}
a_i$ with $a_i \in A_{\pm}\cup \{0\}$ for all $i$. Then, for every $\varepsilon > 0$ there are a number $c$ depending only on the two smallest elements of $P$, a number $C$ depending only on $\varepsilon$ and an absolute constant $C_{\pm}$ such that\\
(i)  $ F(k) > k^{ck}$ for all $k>1$, \\
(ii)  $F(k) \leq C(kt)^{(1 + \varepsilon)kt}$ for all $k>1$, \\
(iii) $ F_{\pm} (k) < \exp((kt)^{C_{\pm}})$ for all $k>1$. \\
\end{thm}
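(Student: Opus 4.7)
My plan is to prove the three parts separately: (i) by the greedy algorithm, (ii) by a direct counting argument, and (iii) by invoking Theorem \ref{thm1} from Section \ref{sec3}. The main obstacle I expect is part (iii), where nested logarithms must be tracked carefully and the counting bound for $H_{P,k}$ must be extended to $H_{P,k}^{*}$.

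For (i), I would run the greedy algorithm on $n$: repeatedly subtract the largest element of $A$ not exceeding the current remainder. The theorem of Tijdeman cited in Section \ref{sec4} guarantees that as long as the current remainder $n'$ exceeds $\exp(\sqrt{\log n})$, each step shrinks $n'$ by a factor at least $(\log n)^{c_1/2}$, where $c_1>0$ depends only on the two smallest primes of $P$; below that threshold each step shrinks the remainder by a factor at least $c_2>1$ depending on the smallest prime of $P$. Summing the two regimes, at most $2\log n/(c_1\log\log n)+\sqrt{\log n}/\log c_2$ summands suffice. Taking $n=F(k)-1$ and inverting the resulting inequality gives $F(k)>k^{ck}$ for a suitable constant $c$, and since $A\subseteq A_{\pm}$ the same lower bound applies to $F_{\pm}(k)$.

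For (ii), I would bound the number of distinct sums directly. Since $(A\cup\{0\})\cap[0,n]$ has at most $(C_6\log n)^{t}$ elements, the set of sums of exactly $k$ of them (hence, using the zero summand, of at most $k$ of them) has cardinality at most $(C_6\log n)^{kt}$. Whenever this is smaller than $n$, pigeonhole forces some positive integer $\leq n$ to be unrepresented; rearranging $kt<\log n/(\log\log n+\log C_6)$ and using monotonicity of $\log x/(\log\log x+\log C_6)$ for large $x$ shows that $n>(kt)^{(1+\varepsilon)kt}$ suffices once $kt$ is sufficiently large, after which the finitely many remaining small values of $kt$ are absorbed into a constant $C=C(\varepsilon)$, giving $F(k)\leq C(kt)^{(1+\varepsilon)kt}$.

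For (iii), which is the most delicate step, I would first choose the least integer $i>10$ with $j>(\log j)^{C_5 kt\log\log\log j}$ for all $j\geq i$; minimality forces $i<2(\log i)^{C_5 kt\log\log\log i}$, and iterating this self-referential bound yields $\log i<C_9\,kt(\log kt)(\log\log kt)$ for a suitable $C_9$. Theorem \ref{thm1} then supplies a modulus $m$ with $\log i\leq\log m\leq(\log i)^{C_3}$ for which $H_{P,k}\pmod{m}$ occupies fewer than $m$ residue classes. The same estimate applies to $H_{P,k}^{*}\pmod{m}$, because Lemma \ref{lem1} bounds $\sharp\{b^{u}\!\pmod{m}:u\geq 0\}$ by a quantity insensitive to the sign of $b$, so replacing each prime power by its negative only doubles a constant inside the $kt$-th power. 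Pigeonhole then yields a positive integer $n\leq m$ not representable by $\leq k$ elements of $A_{\pm}\cup\{0\}$, and $\log n\leq(\log i)^{C_3}<(kt)^{C_{\pm}}$ for an absolute $C_{\pm}$ built from $C_3$ and $C_9$, giving $F_{\pm}(k)<\exp((kt)^{C_{\pm}})$. The main difficulty lies in propagating the bound on $\log i$ cleanly through the exponent $C_{3}$ while keeping $C_{\pm}$ absolute, and in checking that the sign-insensitivity of Lemma \ref{lem1} really does transfer Theorem \ref{thm1} from $H_{P,k}$ to $H_{P,k}^{*}$ without degrading the constants.
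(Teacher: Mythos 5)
Your proposal follows the paper's own proof essentially step for step: the greedy algorithm with the two-regime reduction for (i), the $(C_6\log n)^{kt}$ counting/pigeonhole argument for (ii), and for (iii) the choice of the minimal $i$, the bound $\log i < C_9 kt(\log kt)(\log\log kt)$, the modulus $m$ from Theorem \ref{thm1} extended to $H_{P,k}^{*}$ via the sign-insensitivity of Lemma \ref{lem1}, and the final pigeonhole modulo $m$. It is correct and requires no further comment.
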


\noindent {\bf Remark 1.}
In Section 5 we shall use an ineffective method to show that $C_{\pm} =16$ suffices.
\\
\\
{\bf Remark 2.} Following the proof of Theorem 3(iv) of \cite{ht} it can be shown that there are infinitely many positive integers $k$ for which $F_{\pm}(k) \leq \exp (C^*_{\pm} kt  \log(kt) \log \log (kt))$ for some suitable effectively computable constant $C^*$. In Section 5 we derive the better upper bound $(kt)^{(1 + \varepsilon)kt}$ for $F_{\pm}(k)$ for all but finitely many $k$. However, it cannot be deduced from the proof from which value of $k$ on this bound holds.
\\
\\
{\bf Remark 3.} Using the above methods similar bounds can be derived if $P$ is replaced by any finite set of positive integers.

\section{Application of the ineffective Subspace theorem}

By applying another version of the Subspace Theorem we derive an estmate for $F_{\pm}(k)$ which is much better than the bound in Theorem 2(iii) and holds for all but finitely many $k$'s.
\begin{thm}
\label{thm4} Under the conditions of Theorem $2$ for every $\varepsilon > 0$ there is a number $c^*_{\pm}$ depending only on $P,k$ and $\varepsilon$ such that
$$ F_{\pm}(k) \leq (kt)^{(1 + \varepsilon)kt}$$ whenever $k >c^*_{\pm}$.
\end{thm}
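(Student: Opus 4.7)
The plan is to adapt the direct counting argument of Section~\ref{sec4} for $F(k)$ to the signed setting, using the ineffective Subspace Theorem to control cancellations among the terms $a_i\in A_{\pm}$. Set $N=(kt)^{(1+\varepsilon)kt}$. Every representable $n\in[1,N]$ admits a representation $n=a_1+\cdots+a_l$ with $l\leq k$ and $a_i\in A_{\pm}$; after repeatedly discarding any proper subsum that vanishes we may assume the tuple is \emph{non-degenerate}, i.e.\ no proper non-empty subsum equals zero. It then suffices to show that the number of non-degenerate tuples of length $\leq k$ in $A_{\pm}$ whose sum lies in $[1,N]$ is strictly less than $N$ for all $k$ exceeding some $c^*_{\pm}(P,\varepsilon)$; this forces some integer in $[1,N]$ to be non-representable and yields $F_{\pm}(k)\leq N$.

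I would split this count according to $M:=\max_i|a_i|$. In the \emph{tame} regime $M\leq N^2$, each $a_i$ is an integer $S$-unit (with $S=P\cup\{\infty\}$) of absolute value at most $N^2$, and the number of such units is $O((\log N)^t)$. The tame count is therefore at most $k(C\log N)^{tk}$, and since $\log N=(1+\varepsilon)kt\log(kt)$, a short calculation yields $(C\log N)^{tk}=(kt)^{tk(1+o(1))}$ as $kt\to\infty$, which is $o(N)$. In the \emph{wild} regime $M>N^2$, the sum $\sigma=\sum a_i\in[1,N]$ satisfies $M>\sigma^{2}$, so the tuple lies in the ``large-solution'' range of the equation $a_1+\cdots+a_l=\sigma$. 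Here I would invoke the Qualitative (ineffective) Subspace Theorem in the form of the Evertse--Schlickewei--Schmidt bound for non-degenerate $S$-unit equations: for any fixed $\delta>0$, the number of non-degenerate tuples $(a_1,\ldots,a_l)\in A_{\pm}^l$ with $l\leq k$ and $\max_i|a_i|>|\sigma|^{1+\delta}$ is bounded by a constant $c=c(k,t,\delta)$ independent of $\sigma$. Taking $\delta=1$ bounds the wild contribution by $O_{P,k}(1)$, again $o(N)$.

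Combining the two contributions, the total count is below $N$ once $k$ exceeds some threshold $c^*_{\pm}(P,\varepsilon)$, and the theorem follows. The principal obstacle is the wild regime: the equation $\sum a_i=\sigma$ is not a pure $S$-unit equation, since $\sigma$ need not be a product of primes from $P$, so one cannot apply Evertse's theorem directly after dividing by $\sigma$ (which would force $|S|$ to grow with $\omega(\sigma)$, destroying the bound). The correct tool is the ``gap'' form of the Subspace Theorem, which controls non-degenerate $S$-unit solutions whose heights vastly exceed the height of the right-hand side and yields a uniform bound. Because only the qualitative Subspace Theorem is available in this form, the constant $c^*_{\pm}(P,\varepsilon)$ is not effectively computable, consistent with Remark~2.
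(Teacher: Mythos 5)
Your overall strategy coincides with the paper's: reduce to non-degenerate tuples, split according to whether $\max_i|a_i|$ is small or large, count the small-height tuples directly exactly as in the proof of Theorem 2(ii), and dispose of the large-height tuples by an ineffective finiteness theorem of Subspace type. The tame-regime computation is correct and is the same calculation the paper performs.

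The wild regime, however, contains a genuine gap. The lemma you invoke is a \emph{per-$\sigma$} statement: for each fixed $\sigma$, at most $c(k,t,\delta)$ non-degenerate tuples satisfy $\max_i|a_i|>|\sigma|^{1+\delta}$. Summing this over the $N$ possible values $\sigma\in[1,N]$ gives only $O(N)$, which is useless; to conclude that the wild contribution is $o(N)$ you need finiteness (or at worst an $o(N)$ bound) for the set of wild tuples \emph{as $\sigma$ ranges over all of $[1,N]$}, and that is precisely the point that has to be proved rather than cited. Moreover, you correctly observe that one cannot divide by $\sigma$ and appeal to uniform bounds for $S$-unit equations, but the ``gap form of the Subspace Theorem'' you then invoke is not a standard citable statement in the form you need: applying Evertse's Corollary 1 (Lemma \ref{ev} of the paper) to the tuple $(-\sigma,a_1,\dots,a_l)$ requires $\gcd(\sigma,a_1,\dots,a_l)=1$ and controls only the \emph{$P$-free part} of $\sigma$ in the product condition, so non-primitive tuples (those with $\gcd(a_1,\dots,a_l)=g>1$, of which there can be $\gg(\log N)^t$ rescalings of each primitive one) are not covered without an extra argument. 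The paper avoids all of this by restricting attention to targets $n$ \emph{coprime to every prime of $P$}: then the gcd condition holds automatically, the product in Evertse's inequality equals $n$ itself, and Corollary 1 yields directly that all but finitely many such $n$ (those $n\le N_0$) have every representation satisfying $|a_j|<n^2$; since there are still at least $2^{-t}N-2^t$ coprime targets up to $N$, the tame count $N_0+2\cdot 2^k(3\log N)^{kt}$ beats it and the theorem follows. Your version can be repaired — reduce each wild tuple by its gcd $g$, check that the reduced tuple still satisfies Evertse's product condition, and observe that $g$ is then bounded by the (finite) maximum of $M_0/\sigma_0^2$ over the finitely many primitive wild tuples — but that reduction is the missing content of your appeal to a ``gap form,'' and as written the step does not stand.
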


\noindent In the proof we apply the following result of Evertse. Here the $p$-adic value $|x|_p$ is defind as $|x| p^{-r}$ where $p^r || x $.

\begin{lem} [\cite{ev}, Corollary 1] \label{ev}
Let $c,d$ be constants with $c>0, 0 \leq d < 1$. Let $S_0$ be a finite set of primes and let $l$ be a positive integer.
Then there are only finitely many tuples $ (x_0, x_1, \dots, x_l)$ of rational integers such that
$$ x_0 + x_1 + \dots + x_l = 0; $$
$$x_{i_0} + x_{i_1} + \dots x_{i_s} \not= 0 $$
for each proper, non-empty subset $\{i_0, i_1, \dots, i_s\}$ of $ \{0, 1,  \dots, l \};$
$$\gcd (x_0, x_1, \dots, x_l) =1 ;$$
$$\prod_{j=0}^{l} \left( |x_j| \prod_{p \in S_0} |x_j|_p \right) \leq c  \left( \max_{0 \leq j \leq l} |x_j| \right)^d.$$
\end{lem}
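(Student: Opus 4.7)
The plan is to prove the lemma by induction on $l$, using Schlickewei's $p$-adic generalization of Schmidt's Subspace Theorem as the main tool. The base case $l = 1$ is trivial: $x_0 + x_1 = 0$ together with $\gcd(x_0, x_1) = 1$ forces $(x_0, x_1) = \pm(1, -1)$. For $l \geq 2$ I will argue that all solutions lie in finitely many proper subspaces of the hyperplane $\sum_j x_j = 0$, and that inside each such subspace the system reduces to an equation of the same type with strictly fewer summands, to which the induction hypothesis applies.

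For the inductive step, let $S = S_0 \cup \{\infty\}$ and view the solutions as $\mathbb{Z}$-points in the hyperplane $V = \{(x_0, \dots, x_l) \in \mathbb{Q}^{l+1} : \sum_j x_j = 0\}$, using $(x_1, \dots, x_l)$ as coordinates. At each $v \in S$ I would take $l$ linearly independent linear forms chosen from $\{x_0, x_1, \dots, x_l\}$, omitting for each $v$ the index $j_v$ that minimizes $|x_{j_v}|_v$. Setting $H := \max_j |x_j|$, one computes
$$
\prod_{v \in S}\prod_{i=1}^{l} |L_{v,i}(\mathbf{x})|_v \;=\; \frac{\prod_{j=0}^{l} \bigl(|x_j|\, \prod_{p \in S_0} |x_j|_p \bigr)}{|x_{j_\infty}| \, \prod_{p \in S_0} |x_{j_p}|_p}.
$$
The numerator is $\leq c H^d$ by hypothesis, while the denominator is, by the choice of omitted indices, comparable to $H$ up to factors that can be absorbed. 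Since $d < 1$, the resulting bound is of the form $H^{-\delta}$ for some $\delta > 0$; applying the Subspace Theorem to each of the finitely many choices of $(j_v)_{v \in S}$ then yields finitely many proper rational subspaces of $V$ containing all solutions.

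Inside each such subspace the $x_j$ satisfy a non-trivial linear relation $\sum c_j x_j = 0$ in addition to $\sum x_j = 0$; solving for one coordinate and substituting merges two or more of the $x_j$'s into a single new integer variable, reducing the problem to one of the same form with at least one fewer summand. The non-degeneracy assumption is essential here: it prevents the merged variable from being zero, and guarantees that the reduced equation still has no vanishing proper subsum. The smallness hypothesis passes to the reduced system with fresh constants $c', d'$ satisfying $d' < 1$, and the induction hypothesis closes the argument. The main obstacle I anticipate is the technical bookkeeping at two points: (a) showing that the index-selection rule $j_v$ indeed produces the Subspace Theorem inequality uniformly over solution tuples (requiring a preliminary splitting of solutions into finitely many classes according to the index vector $(j_v)_{v \in S}$ and which coordinate achieves $H$), and (b) verifying that after substitution the smallness condition survives with an admissible exponent, with the finitely many degenerate subspaces where primitivity breaks handled by direct inspection.
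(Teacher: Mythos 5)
The paper does not prove this lemma: it is quoted verbatim from Evertse (\cite{ev}, Corollary 1) and used as a black box, so there is no in-paper proof to compare against. Your overall strategy --- the $p$-adic Subspace Theorem plus a descent through finitely many proper subspaces --- is the method behind Evertse's result, but two steps of your sketch are genuinely wrong or unproved as stated.

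First, the index-selection rule is backwards, and this breaks the central estimate. Since you keep the forms $x_j$ with $j \neq j_v$, your own identity gives
$$
\prod_{v \in S}\prod_{j \neq j_v} |x_j|_v = \frac{\prod_{j=0}^{l} \left( |x_j| \prod_{p \in S_0} |x_j|_p \right)}{|x_{j_\infty}| \prod_{p \in S_0} |x_{j_p}|_p}.
$$
If $j_v$ \emph{minimizes} $|x_j|_v$, the denominator equals $\min_j |x_j| \cdot \prod_{p} \min_j |x_j|_p$, which is at most $\min_j |x_j|$ and can even be far below $1$ (some $x_j$ may be divisible by a huge power of $p$); it is not comparable to $H = \max_j |x_j|$, so no bound of the form $H^{-\delta}$ follows. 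You must omit the index that \emph{maximizes} $|x_j|_v$: then $|x_{j_\infty}| = H$ and $\prod_{p}\max_j |x_j|_p = 1$ precisely because $\gcd(x_0, \dots, x_l) = 1$ --- this is where the gcd hypothesis enters, and your sketch never uses it. With that correction the product is at most $cH^{d-1} \leq H^{-(1-d)/2}$ for large $H$, and the Subspace Theorem applies.

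Second, the descent is the hard part and your description asserts things that are false in general. The quantity controlled by the hypothesis is the $S_0$-free part $|x_j|\prod_{p\in S_0}|x_j|_p$, and the $S_0$-free part of a sum such as $x_{j_1}+x_{j_2}$ is \emph{not} bounded by the $S_0$-free parts of the summands (two integers with tiny $S_0$-free part can sum to an integer with enormous $S_0$-free part), so "the smallness hypothesis passes to the reduced system" does not follow from merging variables. Moreover, eliminating a coordinate via the extra relation produces an equation $\sum_{j \in J} d_j y_j = 0$ with general nonzero rational coefficients, possibly with vanishing proper subsums; the induction hypothesis as you state it (unit coefficients, non-degenerate) does not cover this, so you need to strengthen the statement being proved by induction and split into minimal vanishing subsums. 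These are exactly the points Evertse's argument is organized to handle, and they cannot be dismissed as bookkeeping.
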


\begin{proof}[Proof of Theorem $4$.] Let $n$ be an integer which is not divisible by any prime from $P$.
Suppose $n=a_1 + a_2 + \dots + a_l$ with $a_j \in A_{\pm} $ for $j=1, 2, \dots, l$ with $l \leq k$.
Without loss of generality we may assume that $l$ is minimal, hence $a_1 + a_2 + \dots + a_l$ has no proper subsums which vanish.
Moreover, we know that $\gcd (a_1, a_2, \dots, a_l) =1$.
We apply Lemma \ref{ev} with $c=1, d=1/2, S_0=P$ to the equation $a_0 + a_1 + \dots + a_l=0$ with $a_0 = -n$.
It follows that given $k, P$ there only finitely many tuples $(n, a_1, a_2, \dots, a_l)$ with $\gcd(n,p_1, \dots, p_t) =1$ and $l \leq k$
such that $n=a_1 + a_2 + \dots + a_l$ with $a_j \in A_{\pm}$ for $j=1, 2, \dots, l$ and
$$n  \leq \left( \max_{0 \leq j \leq l} | a_j | \right)^{1/2},$$
hence
$$ n^2 \leq \max_{1 \leq j \leq l} | a_j |.$$
Let $N_0$ be the maximum of $|n|$ for all such tuples, where $N_0=0$ if there are no such tuples.

Next consider positive integers $n>N_0$ which are not divisible by any prime from $P$.
Then, for any representation $n=a_1 + a_2 + \dots + a_l$ with $a_j \in A_{\pm}$ for $j=1, 2, \dots, l$ and $l \leq k$,
we have $ | a_j | < n^2$ for $j=1,2, \dots, l$. Writing $a_j = \pm p_1^{s_1} \cdots p_t^{s_t}$ we obtain $\max_j s_j \leq 3 \log n -1. $
The number of possible tuples $(a_1, \dots, a_l)$  for $l$ is therefore at most $2^l (3 \log n)^{lt} $. Then the number of all possible tuples $(a_1,...,a_j)$ with $j\leq k$ is at most
$2\cdot 2^k(3\log n)^{kt}$. Thus for $N>N_0$ there are at most $N_0 + 2\cdot 2^k (3 \log N)^{kt}$ integers $n\leq N$ coprime to $P$ such that $n$ is representable as sum of at most $k$ integers from $A_{\pm}$. The number of positive integers $n \leq N$ coprime to $P$ is at least $N\prod_{p \in P} (1-1/p) - 2^t> 2^{-t}N-2^t$. Hence for finding an $n$ with $n \leq N$ such that $n$ is not representable in the desired form, it suffices that
$$
2^{-t}N-2^t > N_0 + 2\cdot 2^k (3 \log N)^{kt}.
$$
As in the proof of Theorem 2(ii) it follows that for every $\varepsilon > 0$ there is an unspecified number $c^*_{\pm}$ depending only on $k, P$ and $\varepsilon$ such that
$$ F_{\pm} (k) \leq (kt)^{(1 + \varepsilon)kt} $$ whenever $k>c^*_{\pm}$.
\end{proof}

\noindent {\bf Remark 4.} Theorem 4 is also an improvement of Theorem 3.4(iv) of \cite{ht} where, only for sums of perfect powers, a weaker bound is given.

\end{document}